\documentclass[11pt]{article}
\usepackage{amsmath,amssymb,textcomp,stmaryrd,xifthen,psfrag,graphicx,color}
\usepackage{amsfonts,amsthm}
\usepackage{color}

\SetSymbolFont{stmry}{bold}{U}{stmry}{m}{n} % get rid of the warning when using \boldsymbol ad stmry
\usepackage[T1]{fontenc}
\date{}

\oddsidemargin 0.5cm
\textwidth     16cm
\textheight    20cm
\setlength\parindent{0pt}

\newtheorem{theorem}{Theorem}
\newtheorem{lemma}[theorem]{Lemma}
\newtheorem{cor}[theorem]{Corollary}

\theoremstyle{definition} 
\newtheorem{remark}[theorem]{Remark}

%-------------------------------------------------------------------------------
\newcommand{\dual}[2]{\langle#1\hspace*{.5mm},#2\rangle}
\newcommand{\vdual}[2]{(#1\hspace*{.5mm},#2)}
\newcommand{\abs}[1]{\vert #1 \vert}
\newcommand{\norm}[3][]{#1\|#2#1\|_{#3}}
\newcommand{\snorm}[2]{|#1|_{#2}}

\newcommand{\wilde}{\widetilde}

\newcommand{\bfone}{\ensuremath{\mathbf{1}}}

\newcommand{\cL}{\ensuremath{\mathcal{L}}}

\newcommand{\R}{\ensuremath{\mathbb{R}}}
\newcommand{\N}{\ensuremath{\mathbb{N}}}

\newcommand{\Ff}{\ensuremath{\mathcal{F}}}

\newcommand{\Pp}{\ensuremath{\mathcal{P}}}

%*** vector functions

%*** bios

%============================================================================================
\title{Variational formulation of time-fractional parabolic equations
\thanks{Supported by Conicyt Chile through project FONDECYT 1170672.}}
\author{Michael Karkulik\thanks{Departamento de Matem\'atica, Universidad T\'ecnica Federico Santa Mar\'ia,
  Avenida Espa\~na 1680, Valpara\'iso, Chile,
  \texttt{mkarkulik.mat.utfsm.cl}, email: \texttt{michael.karkulik@usm.cl}}}
%============================================================================================
\begin{document}
\maketitle
\begin{abstract}
  We consider initial/boundary value problems for time-fractional parabolic PDE
  of order $0<\alpha<1$ with Caputo fractional derivative (also called fractional diffusion equations in the
  literature).
  We prove well-posedness of corresponding variational formulations based entirely
  on fractional Sobolev-Bochner spaces, and clarify the question of possible choices of the initial value.

\bigskip
\noindent
{\em Key words}: Fractional diffusion equation, Initial value/boundary value problem, Well-posedness\\
\noindent
{\em AMS Subject Classification}: 26A33, 35K15, 35R11
\end{abstract}
%============================================================================================
\section{Introduction}
%============================================================================================
Physical phenomena based on standard diffusion, where the mean square displacement
of a diffusing particle scales linearly with time $\langle x(t)^2\rangle\sim t$,
are typically modeled by partial differential equations involving
standard (i.e., integer order) differential operators. So-called \textit{anomalous diffusion}, on the other
hand, is characterized by non-linear scaling. For example,
a diversive number of systems exhibit anomalous
diffusion which follows the power-law $\langle x(t)^2\rangle \sim t^\alpha$ with $0<\alpha<1$
(subdiffusion) or $1<\alpha <2$ (superdiffusion). Systems with such power-laws include
ones with constrained pathways such as fractal, disordered, or porous media, polymers, aquifers,
and quantum systems, among others.
We refer to~\cite{MetzlerK_RandomWalkGuide_00} for an extensive overview on the subject.
In the latter work, the authors list various ways how to model anomalous diffusion processes.
For problems involving external fields or boundary conditions, the most natural way
is to consider partial differential equations involving so-called \textit{fractional differential operators}.
In the work at hand, we consider a time-fractional parabolic initial/boundary value problem of the form
\begin{align}
  \begin{split}
    \partial_t^\alpha u - \Delta u &= f \quad\text{ in } (0,T)\times\Omega,\\
    u &= 0\quad\text{ on } (0,T)\times \partial\Omega,\\
    u &= g\quad\text{ for } \left\{ 0 \right\}\times\Omega,
  \end{split}
  \label{eq:fde}
\end{align}
where $(0,T)$ is a time interval and $\Omega\subset\R^n$ a spatial Lipschitz domain.
Here, $-\Delta$ is the spatial Laplacian, $1/2<\alpha<1$, and $\partial_t^\alpha$ is
a fractional time derivative of order $\alpha$. More specifically, we will use the so-called
Caputo derivative, which is defined formally by
\begin{align*}
  \partial_t^\alpha u (t) := \frac{1}{\Gamma(1-\alpha)}\int_0^t (t-s)^{-\alpha} u'(s)\,ds.
\end{align*}
Recently, researchers have started to
analyze finite element methods with respect to their ability to approximate
solutions of fractional differential equations.
While this started with classical Galerkin finite element methods for
steady-state fractional diffusion equations as in~\cite{ErvinR_06,JLPR_15},
numerical methods for time-dependent fractional partial differential equations include
time-stepping methods~\cite{ErvinHR_07,JinLZ_13,JinLZ_16},
Discontinuous Galerkin methods~\cite{Mustapha_15,MustaphaNC_16},
as well as methods based on the Laplace transform~\cite{McLeanST_06}.
It goes without saying that this list is far from being exhaustive.
We mention here also the numerical approach
from~\cite{NochettoO_16} which is based on the extension theory by
Caffarelli and Silvestre~\cite{CaffarelliS_07}.
The aforementioned numerical methods are usually based on
a variational formulation of the equation under consideration.
Existing works on variational formulations of
time-fractional parabolic partial differential equations are scarce; as to our knowledge, the
works~\cite{Zacher_09,SakamotoY_11,AllenCV_16} are of relevance in connection with
our model problem~\eqref{eq:fde} (for Semigroup theory for related Volterra integral equations
see~\cite{Pruess_12}).
These works have in common that (i) their functional analytic setting is not based
exclusively on classical Sobolev regularity in time, but rather involves
the operator $\partial_t^\alpha$, and that (ii) the initial value $g$ is taken from $L_2(\Omega)$.
The goal of the present work is to derive the well-posedness of variational formulations set up in classical
Sobolev-Bochner spaces and to clarify the question of regularity needed for the initial data.
Now, as our functional analytic setting is based only on Sobolev regularity, a result of this kind is
specifically interesting for numerical analysis of the equation~\eqref{eq:fde}.
Indeed, approximation results for functions with certain Sobolev regularity are well known and ubiquitous
in numerical analysis.
The property (i) is owed to the fact that there is no rigorous definition of time-fractional
derivatives on fractional Sobolev-Bochner spaces available.
It sure is true that operators defined between real valued Sobolev spaces
$L_2(J)\rightarrow L_2(J)$ do extend to vector-valued counterparts $L_2(J;X)\rightarrow L_2(J;X)$
(for $X$ a Hilbert space, this is a classical result of Marcienkiwicz and Zygmund~\cite{Marcinkiewicz_39}), but
the fact that we are dealing with Sobolev regularity $H^\alpha(J;X)$ in time needs some care and
additional analysis.
To that end, we will show first that the fractional Caputo derivative is a linear and bounded operator
on a time-fractional Sobolev-Bochner space. This way, we can consider a variational formulation of~\eqref{eq:fde}
based exclusively on Sobolev regularity, which resembles classical variational formulations for
parabolic equations.
Regarding the point (ii), the choice of $g\in L_2(\Omega)$ as initial value is indeed admissible,
but one has to bear in mind the following:
While the space $L_2(J;\wilde H^1(\Omega))\cap L_2(J;H^{-1}(\Omega))$, used in variational formulations of
parabolic equations, is continuously embedded in $C(\overline J;L_2(\Omega))$, this is no longer true
for the equation~\eqref{eq:fde}. We will show that the space of solutions of our variational
formulation of~\eqref{eq:fde} is continuously embedded in
$C(\overline J;H^{1-1/\alpha-\varepsilon}(\Omega))$ for all $\varepsilon>0$.
Our main result is then well-posedness of the variational formulation,
cf. Theorem~\ref{thm:var}.

%in~\cite{Zacher_09}, a variational formulation of
%a time-fractional PDE with Riemann-Liouville derivative is analyzed.
%In~\cite{SakamotoY_11}, the authors consider the equation~\eqref{eq:fde} in the cases $\alpha\in(0,1)$
%and $\alpha\in(0,2)$. The work~\cite{AllenCV_16} considers

%============================================================================================
\section{Mathematical setting and main results}
%============================================================================================
%--------------------------------------------------------------------------------------------
\subsection{Sobolev and Bochner spaces}
%--------------------------------------------------------------------------------------------
We denote by $\Omega\subset\R^d$ a (spatial) Lipschitz domain,
and by $J=(0,T)$ for $T>0$ a temporal interval. We use Lebesgue and Sobolev spaces
$L_2(\Omega)$ and $\wilde H^1(\Omega)$, the tilde denoting vanishing trace on the boundary
$\partial\Omega$. The fractional Sobolev spaces $\wilde H^s(\Omega)$ for $s\in(0,1)$ are
defined by the K-method of interpolation as
$\wilde H^s(\Omega) := [ L_2(\Omega), \wilde H^1(\Omega) ]_{s,2}$, cf.~\cite{Triebel},
where
\begin{align*}
  [B_0,B_1]_{s,2} := \left\{ u\in B_0 \mid \norm{u}{\left[ B_0,B_1 \right]_{s,2}}<\infty \right\},
  \quad \norm{u}{\left[ B_0,B_1 \right]_{s,2}}^2 := \int_0^\infty t^{-2s-1}K_{[B_0,B_1]}(t,u)^2\,dt
\end{align*}
with the K-functional
\begin{align*}
  K_{[B_0,B_1]}(t,u)^2 := \inf_{w\in B_1}\norm{u-w}{B_0}^2 + t^2\norm{w}{B_1}^2.
\end{align*}
The topological dual of a Banach space $X$ is denoted by $X'$, and we define
$H^{-s}(\Omega) := \wilde H^s(\Omega)'$
as the topological duals with respect to the extended $L_2(\Omega)$ scalar product $\vdual{\cdot}{\cdot}$,
and duality will be denoted by $\dual{\cdot}{\cdot}$.
%By~\cite[1.11.2]{Triebel}, it follows
%\begin{align}\label{eq:int:dual}
%  \wilde H^{-s}(\Omega) = [L_2(\Omega), \widetilde H^{-1}(\Omega)].
%\end{align}
We set $\widetilde H^0(\Omega):=L_2(\Omega)$.
In time, we additionally use Lebesgue and Sobolev spaces $L_2(J)$ and $H^\alpha(J)$, for $\alpha\in(0,1]$.
The $L_2(J)$ scalar product will also be denoted by $\vdual{\cdot}{\cdot}$, but it will always be clear
which scalar product we are using.
We use the notation $Du$ to denote the distributional derivative of a function $u$ given on $J$.
For $\alpha\in(0,1)$ the norm on the space $H^\alpha(J)$ is given by
\begin{align*}
  \norm{f}{H^\alpha(J)}^2 := \norm{f}{L_2(J)}^2 + \snorm{f}{H^\alpha(J)}^2 < \infty,
  \qquad\text{ where }
  \snorm{f}{H^\alpha(J)}^2 := \int_J\int_J \frac{\abs{f(s)-f(t)}^2}{\abs{s-t}^{2\alpha+1}}\,ds\,dt.
\end{align*}
We mention that $H^\alpha(J) = [L_2(J),H^1(J)]_{\alpha,2}$ with equivalent norms.
We set $H^{-s}(J):=\wilde H^s(J)'$ and $\wilde H^{-s}(J):=H^s(J)$ for $s\in(0,1)$.
For a Banach space $X$, we use
the Bochner space $L_2(J;X)$ of functions $f:J\rightarrow X$ which are strongly measurable with
respect to the Lebesgue measure $ds$ on $J$ and
\begin{align*}
  \norm{f}{L_2(J;X)}^2 := \int_{J} \norm{f(s)}{X}^2\,ds < \infty.
\end{align*}
For $w$ a measurable, positive function on $J$, we denote by $L_2(J,w;X)$ the $w$-weighted Lebesgue space
of strongly measurable functions with norm
\begin{align*}
  \norm{f}{L_2(J,w;X)}^2 := \int_{J} w(s)^2\norm{f(s)}{X}^2\,ds < \infty.
\end{align*}
We say that a function $f\in L_2(J;X)$ has a weak time-derivative $\partial_tf\in L_2(J;X)$, if
\begin{align}\label{def:dt}
  \int_J \partial_tf \varphi = - \int_J f \partial_t\varphi \quad\text{ for all } \varphi\in C_0^\infty(J).
\end{align}
Note that this last integral has to be understand in the sense of Bochner. We define the space
$H^1(J;X)$ as the space of functions with
\begin{align*}
  \norm{f}{H^1(J;X)}^2 := \norm{f}{L_2(J;X)}^2 + \norm{\partial_tf}{L_2(J;X)}^2.
\end{align*}
For $0 < \alpha < 1$, we also use the fractional
Sobolev-Bochner space $H^\alpha(J;X)$ of $ds$-strongly measurable
functions $f:J\rightarrow X$ with
\begin{align*}
  \norm{f}{H^\alpha(J;X)}^2 := \norm{f}{L_2(J;X)}^2 + \snorm{f}{H^\alpha(J;X)}^2 < \infty,
  \qquad\text{ where }
  \snorm{f}{H^\alpha(J;X)}^2 := \int_J\int_J \frac{\norm{f(s)-f(t)}{X}^2}{\abs{s-t}^{2\alpha+1}}\,ds\,dt.
\end{align*}
We will also use these Bochner spaces on $\R$ instead of $J$. For a recent introduction to
Bochner spaces, we refer to~\cite{HNVW_16}.
%--------------------------------------------------------------------------------------------
\subsection{Fractional time derivative on Bochner spaces}
%--------------------------------------------------------------------------------------------
For $0<\beta<1$, we define the left and right-sided Riemann-Liouville fractional integral
operators
\begin{align*}
  _0D^{-\beta} u(t) := \frac{1}{\Gamma(\beta)} \int_0^t(t-s)^{\beta-1}u(s)\,ds
  \quad\text{ and }\quad
  D^{-\beta}_T u(t) := \frac{1}{\Gamma(\beta)} \int_t^{T} (s-t)^{\beta-1}u(s)\,ds,
\end{align*}
where $\Gamma$ is Euler's Gamma function.
For sufficiently smooth functions $u$, the left-sided Caputo
fractional derivative $\partial^\alpha$ for $\alpha\in(0,1)$ is defined as
$\partial^\alpha u := {_0D}^{\alpha-1}Du$. We will show below in Lemma~\ref{lem:ext2bochner} that
the tensorised version $_0D^{-\beta}\otimes I$ defined by
$(_0D^{-\beta}\otimes I)(u\otimes x) := (_0D^{-\beta}u)\otimes x$
can be extended uniquely to a linear and bounded operator
$_0D^{-\beta}:L_2(J;X)\rightarrow H^\beta(J;X)$ for a Hilbert space $X$.
This allows us to prove the following result. The proof will be carried
out below in Section~\ref{proofs}.
\begin{theorem}\label{thm:fracder}
  Let $\partial_t$ be the weak time derivative defined in~\eqref{def:dt}.
  Then, for $\alpha\in(1/2,1)$, the operator $\partial_t^\alpha := {_0D}^{\alpha-1}\circ \partial_t$
  is linear and bounded as $\partial_t^\alpha: H^\alpha(J;H^{-1}(\Omega))\rightarrow L_2(J;H^{-1}(\Omega))$.
  \qed
\end{theorem}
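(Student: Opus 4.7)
The plan is to reduce the theorem to a Fourier multiplier estimate on $\R$ and then tensorise to the Bochner setting via Plancherel, exploiting that $X := H^{-1}(\Omega)$ is a Hilbert space. Set $\beta := 1-\alpha \in (0,1/2)$. Since $\alpha > 1/2$, the Hilbert-valued embedding $H^\alpha(J;X) \hookrightarrow C(\overline{J};X)$ gives a well-defined trace $u(0) \in X$. Writing $u = u(0) + u_0$ with $u_0(0) = 0$ one has $\partial_t u = \partial_t u_0$, hence $\partial_t^\alpha u = \partial_t^\alpha u_0$, so it suffices to bound $\partial_t^\alpha$ on the closed subspace where $u(0) = 0$. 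For smooth $u$ with $u(0) = 0$, differentiating under the integral (after the change of variables $r = t-s$) yields the key identity
\begin{equation*}
  \partial_t^\alpha u = {_0D}^{\alpha-1}(\partial_t u) = \partial_t\bigl({_0D}^{-\beta} u\bigr),
\end{equation*}
the boundary contribution $t^{\beta-1}u(0)/\Gamma(\beta)$ vanishing by hypothesis. In this form the weak derivative acts \emph{after} the smoothing, which is the order under which the composition makes sense on the Sobolev--Bochner scale.

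To bound $\partial_t \circ {_0D}^{-\beta}$ on this subspace, I would extend $u$ to $\wilde u \in H^\alpha(\R;X)$ with $\wilde u \equiv 0$ on $(-\infty,0]$ and $\norm{\wilde u}{H^\alpha(\R;X)} \le C\norm{u}{H^\alpha(J;X)}$; the condition $u(0) = 0$ is precisely what makes the zero extension across the left endpoint belong to $H^\alpha(\R)$, while a Stein-type reflection handles $t = T$. Since $\wilde u$ is causal, ${_0D}^{-\beta}\wilde u$ coincides on $\R$ with the convolution of $\wilde u$ against the causal kernel $t_+^{\beta-1}/\Gamma(\beta)$, whose Fourier transform is a constant multiple of $(i\xi)^{-\beta}$. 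Hence $\partial_t \circ {_0D}^{-\beta}$ corresponds on $\R$ to the scalar multiplier $(i\xi)^\alpha$, of modulus $|\xi|^\alpha \le (1+\xi^2)^{\alpha/2}$, and Plancherel in $L_2(\R;X)$ --- the same Hilbert-space ingredient that underlies Lemma~\ref{lem:ext2bochner} --- gives
\begin{equation*}
  \norm{\partial_t^\alpha \wilde u}{L_2(\R;X)}^2 = \int_\R |\xi|^{2\alpha}\, \norm{\wat{\wilde u}(\xi)}{X}^2\, d\xi \le \norm{\wilde u}{H^\alpha(\R;X)}^2.
\end{equation*}
Restriction to $J$ together with density of smooth functions (with $u(0)=0$) in the closed subspace finishes the proof.

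The delicate point is the extension across $t=0$: for $\alpha > 1/2$, a zero extension of an $H^\alpha$-function belongs to $H^\alpha(\R)$ only when the trace at the endpoint vanishes. The preliminary reduction to $u(0) = 0$ is therefore not a cosmetic simplification but the crux of the matter: without it one would pick up the singular term $t^{\beta-1} u(0)/\Gamma(\beta)$, which fails to be in $L_2(J;X)$ near the origin precisely because $\beta < 1/2$. That the Caputo derivative is blind to this singularity is what makes Theorem~\ref{thm:fracder} possible.
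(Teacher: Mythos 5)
Your argument is sound, but it follows a genuinely different route from the paper. The paper never touches the initial trace: it composes mapping properties at the level of dual Sobolev--Bochner spaces, namely $\partial_t: H^\alpha(J;H^{-1}(\Omega))\rightarrow \wilde H^{1-\alpha}(J;\wilde H^1(\Omega))'$ obtained by interpolating the two endpoint cases in~\eqref{eq:dt} together with Lemma~\ref{lem:interpolation}, then the identification $\wilde H^{1-\alpha}(J;\cdot)=H^{1-\alpha}(J;\cdot)$ (here $1-\alpha<1/2$ is where $\alpha>1/2$ enters), and finally Lemma~\ref{lem:ext2}, which extends ${_0D}^{\alpha-1}$ by duality (via its adjoint $D_T^{-(1-\alpha)}$ and Lemma~\ref{lem:ext2bochner}) to a bounded map $H^{1-\alpha}(J;\wilde H^1(\Omega))'\rightarrow L_2(J;H^{-1}(\Omega))$; the Fourier analysis is thus confined to Lemma~\ref{lem:ext2bochner}, and the composition ${_0D}^{\alpha-1}\circ\partial_t$ is well defined on all of $H^\alpha(J;H^{-1}(\Omega))$ without any reduction. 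You instead use $\alpha>1/2$ through the embedding into $C(\overline J;X)$, subtract the initial value (to which the Caputo derivative is blind), rewrite the operator as $\partial_t\circ{_0D}^{-(1-\alpha)}$ on the vanishing-trace subspace, extend to the line and conclude with the multiplier $(i\xi)^\alpha$ and Hilbert-valued Plancherel. What your route buys is a more concrete picture: it makes explicit why the Caputo (rather than Riemann--Liouville) derivative avoids the non-$L_2$ singularity $t^{-\alpha}u(0)/\Gamma(1-\alpha)$, and it yields the bound by a single clean multiplier estimate. What it costs is a collection of ingredients you assert rather than prove: the vector-valued Hardy inequality (equivalently, boundedness of the zero extension on the subspace $u(0)=0$ for $1/2<\alpha<1$), a vector-valued Stein-type extension across $t=T$, density of smooth functions vanishing at $0$ in that subspace, and the compatibility of the operator you define by density with the composition ${_0D}^{\alpha-1}\circ\partial_t$ as stated in the theorem, where ${_0D}^{\alpha-1}$ acts on the weak derivative via the dual-space extension. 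All of these are standard and fixable (the Hilbert-valued versions follow from the scalar ones applied to $t\mapsto\norm{u(t)}{X}$, and the compatibility can be checked by testing against $D_T^{-(1-\alpha)}$ of smooth functions as in~\eqref{eq:lem:ext2}), so I regard your proposal as a correct alternative proof, just with more unstated machinery than the paper's two-line composition.
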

%--------------------------------------------------------------------------------------------
\subsection{Variational formulation and main result}
%--------------------------------------------------------------------------------------------
Our variational formulation of~\eqref{eq:fde} is the following: Given $f\in L_2(J;H^{-1}(\Omega))$ and
$g\in H^{1-1/\alpha + \delta}(\Omega)$ for some $\delta>0$,
find $u\in L_2(J;\wilde H^1(\Omega))$ with $u\in H^\alpha(J;H^{-1}(\Omega))$ such that
\begin{align}\label{eq:var}
  \dual{\partial^\alpha_t u}{v} + \vdual{\nabla u}{\nabla v} = \vdual{f}{v} \quad\text{ for all }
  v\in \wilde H^1(\Omega),
\end{align}
almost everywhere in $J$, and $u(0,\cdot) = g(\cdot)$. The duality $\dual{\partial_t^\alpha u}{v}$
in~\eqref{eq:var} makes sense
due to the mapping properties of $\partial_t^\alpha$ from Theorem~\ref{thm:fracder},
and the initial condition makes makes sense as we will show in Corollary~\ref{cor:embedding} below that
$L_2(J;\wilde H^1(\Omega))\cap H^\alpha(J;H^{-1}(\Omega))$ is continuously embedded in
$C(\overline J;H^{1-1/\alpha - \varepsilon}(\Omega))$ for all $\varepsilon>0$.
The following theorem is our main result and will be proven below in Section~\ref{proofs}.
\begin{theorem}\label{thm:var}
  The variational formulation~\eqref{eq:var} is well posed: there exists a unique solution $u$, and
  \begin{align*}
    \norm{u}{L_2(J;\wilde H^1(\Omega))} + \norm{u}{H^\alpha(J;H^{-1}(\Omega))}
    \leq C_\delta \left( 
    \norm{g}{H^{1-1/\alpha+\delta}(\Omega)} + \norm{f}{L_2(J;H^{-1}(\Omega))}
    \right).
  \end{align*}
  The constant $C_\delta>0$ depends only on $\delta$.
  \qed
\end{theorem}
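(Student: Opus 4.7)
The strategy is to construct the solution $u$ explicitly by diagonalizing the spatial operator. Let $\{(\lambda_k,\phi_k)\}_{k\in\N}$ denote the $L_2(\Omega)$-orthonormal eigenbasis of the Dirichlet Laplacian $-\Delta$, and recall the spectral characterizations $\norm{v}{\wilde H^s(\Omega)}^2\simeq\sum_k\lambda_k^s|v_k|^2$ and $\norm{v}{H^{-s}(\Omega)}^2\simeq\sum_k\lambda_k^{-s}|v_k|^2$ with $v_k=(v,\phi_k)$. Writing $u=\sum_k u_k(t)\phi_k$ and similarly expanding $f$ and $g$, then testing~\eqref{eq:var} against $v=\phi_k$, the variational formulation decouples into a family of scalar linear fractional ODE
\begin{align*}
\partial_t^\alpha u_k+\lambda_k u_k=f_k\quad\text{a.e.\ in }J,\qquad u_k(0)=g_k,
\end{align*}
whose unique solution admits the classical Mittag-Leffler representation
\begin{align*}
u_k(t)=g_k\,E_\alpha(-\lambda_k t^\alpha)+\int_0^t(t-s)^{\alpha-1}E_{\alpha,\alpha}(-\lambda_k(t-s)^\alpha)f_k(s)\,ds.
\end{align*}
This formula simultaneously gives existence, once convergence of the spatial series in the norms of the theorem is verified, and uniqueness, since the scalar fractional ODE determines each Fourier coefficient uniquely.

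The second step is to establish mode-wise norm estimates and sum them. Rescaling via $\tau=\lambda_k^{1/\alpha}t$ and using the bound $|E_{\alpha,\beta}(-x)|\lesssim(1+x)^{-1}$ together with the expansion $E_\alpha(-\tau^\alpha)=1-\tau^\alpha/\Gamma(1+\alpha)+O(\tau^{2\alpha})$ near the origin and the asymptotics $E_\alpha(-\tau^\alpha)\sim\Gamma(1-\alpha)^{-1}\tau^{-\alpha}$ at infinity (whose square is integrable thanks to $\alpha>1/2$), I would establish
\begin{align*}
\norm{E_\alpha(-\lambda_k\cdot^\alpha)}{L_2(J)}^2\lesssim\lambda_k^{-1/\alpha},\qquad\norm{E_\alpha(-\lambda_k\cdot^\alpha)}{H^\alpha(J)}^2\lesssim\lambda_k^{2-1/\alpha}.
\end{align*}
Weighting by $\lambda_k$ and $\lambda_k^{-1}$ respectively, the initial-data contribution $u^{(g)}$ is controlled in $\norm{\cdot}{L_2(J;\wilde H^1(\Omega))}^2+\norm{\cdot}{H^\alpha(J;H^{-1}(\Omega))}^2$ by $\sum_k\lambda_k^{1-1/\alpha}|g_k|^2\leq C_\delta\norm{g}{H^{1-1/\alpha+\delta}(\Omega)}^2$. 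The forcing contribution is handled analogously: Young's convolution inequality applied to the kernel $(\cdot)^{\alpha-1}E_{\alpha,\alpha}(-\lambda_k(\cdot)^\alpha)\in L_1(J)$ yields $\sum_k\lambda_k^{-1}\norm{f_k}{L_2(J)}^2=\norm{f}{L_2(J;H^{-1}(\Omega))}^2$ as an upper bound for both relevant norms of $u^{(f)}$. Theorem~\ref{thm:fracder} is what justifies the pairing $\dual{\partial_t^\alpha u}{v}$ in~\eqref{eq:var} and guarantees that the mode-wise identities reassemble into the full variational equation.

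The main technical obstacle lies in the $H^\alpha(J)$ estimate for $E_\alpha(-\lambda_k\cdot^\alpha)$. The $t^\alpha$ leading contribution of $E_\alpha(-\lambda_k t^\alpha)-1$ near $t=0$ sits exactly on the edge of $H^\alpha$-regularity, so the Sobolev-Slobodeckij double integral, after rescaling to $(0,T\lambda_k^{1/\alpha})$, has to be split into near-diagonal, near-origin and tail regions in order to track the sharp $\lambda_k^{2-1/\alpha}$ dependence. The threshold exponent $1-1/\alpha$ in the assumption on $g$ and the small margin $\delta>0$ needed to upgrade the borderline sum $\sum_k\lambda_k^{1-1/\alpha}|g_k|^2$ to a genuine fractional Sobolev estimate (with $C_\delta$ deteriorating as $\delta\to 0$) both originate from this critical integrability.
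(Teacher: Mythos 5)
Your construction is the same as the paper's (eigenfunction expansion, Mittag--Leffler representation of the decoupled scalar fractional ODEs, mode-wise $L_2(J)$ and $H^\alpha(J)$ estimates summed via the spectral characterization of the spatial norms, uniqueness from uniqueness of the scalar problems), but there is a genuine gap in your treatment of the forcing contribution. Young's convolution inequality with the kernel $t^{\alpha-1}E_{\alpha,\alpha}(-\lambda_k t^\alpha)$, whose $L_1(J)$ norm is $\lesssim\lambda_k^{-1}$, gives only the $L_2(J)$ bound $\norm{\psi_k}{L_2(J)}\lesssim\lambda_k^{-1}\norm{f_k}{L_2(J)}$; it cannot give the needed seminorm bound $\snorm{\psi_k}{H^\alpha(J)}\lesssim\norm{f_k}{L_2(J)}$, because $f_k$ has no time regularity and convolution with an $L_1$ kernel does not gain $\alpha$ derivatives uniformly in $\lambda_k$ by Young alone. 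The paper closes exactly this step by a different mechanism: the norm equivalence $\snorm{v}{H^\alpha(J)}\sim\norm{{_0D^{\alpha-1}}Dv}{L_2(J)}$ of Lemma~\ref{lem:equivalence} combined with the fact that $\psi_k$ itself solves the scalar equation, so that $\snorm{\psi_k}{H^\alpha(J)}\sim\norm{\partial_t^\alpha\psi_k}{L_2(J)}\lesssim\norm{f_k}{L_2(J)}+\lambda_k\norm{\psi_k}{L_2(J)}\lesssim\norm{f_k}{L_2(J)}$ after inserting the $L_2$ bound. Some argument of this type (or a genuine mapping-property proof for the convolution kernel) must replace your appeal to Young for the $H^\alpha(J;H^{-1}(\Omega))$ estimate.

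Two further points. For the homogeneous part you propose to prove $\snorm{E_\alpha(-\lambda_k(\cdot)^\alpha)}{H^\alpha(J)}^2\lesssim\lambda_k^{2-1/\alpha}$ by rescaling and splitting the Slobodeckij double integral; this can be made to work for $\alpha>1/2$ (and is even sharper than the paper's $\lambda_k^{2-1/\alpha+\delta}$), but you leave it as a sketch, whereas the paper gets it in one line from Lemma~\ref{lem:equivalence} and the eigen-identity~\eqref{eq:d:eig}, which reduce the seminorm to $\lambda_k\norm{E_\alpha(-\lambda_k(\cdot)^\alpha)}{L_2(J)}$ --- note also that your diagnosis of where $\delta$ enters is off: since $\lambda_k\geq\lambda_1>0$, the sum $\sum_k\lambda_k^{1-1/\alpha}\abs{g_k}^2$ is trivially dominated by $\norm{g}{H^{1-1/\alpha+\delta}(\Omega)}^2$, and in the paper $\delta$ appears only because the crude bound $\abs{E_{\alpha,1}(z)}\leq\abs{z}^{-(1-\varepsilon)}$ requires $2\alpha(1-\varepsilon)<1$ for integrability near $t=0$. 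Finally, you verify $u_k(0)=g_k$ mode-wise but never say in which topology the assembled series attains $u(0)=g$; the paper settles this via the embedding of Corollary~\ref{cor:embedding} into $C(\overline J;H^{1-1/\alpha-\varepsilon}(\Omega))$ applied to the (weak limits of the) Galerkin approximations, and your direct-series variant needs an analogous convergence argument both for the initial condition and for reassembling the mode-wise identities into~\eqref{eq:var}.
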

\begin{remark}
  It is textbook knowledge that there holds the continuous embedding
  \begin{align*}
    L_2(J;\wilde H^1(\Omega))\cap H^1(J;H^{-1}(\Omega))\hookrightarrow C(\overline J;L_2(\Omega)).
  \end{align*}
  In the present case, we have the embedding
  \begin{align*}
    L_2(J;\wilde H^1(\Omega))\cap H^{\alpha}(J;H^{-1}(\Omega))\hookrightarrow
    C(\overline J;H^{1-1/\alpha-\varepsilon}(\Omega))
  \end{align*}
  for all $\varepsilon>0$, cf. Lemma~\ref{lem:embedding} below. The reason for the missing power of
  $\varepsilon$ is that we use the embedding result
  $H^{1/2+\varepsilon}(J;X)\hookrightarrow C(\overline J;X)$.
  Furthermore, note that the stability
  estimate of Theorem~\ref{thm:var} involves the $H^{1-1/\alpha +\delta}(\Omega)$ norm of
  the initial data $g$, and the constant $C_\delta$ is expected to blow up for $\delta\rightarrow 0$.
\end{remark}
%============================================================================================
\section{Technical results}
%============================================================================================
%--------------------------------------------------------------------------------------------
\subsection{Fractional integral and differential operators}
%--------------------------------------------------------------------------------------------
We have the following results. The first point is an extension
of a recent result in~\cite{JLPR_15} and can be found in~\cite[Lemma~5]{EFHK_17},
while the second point is part of the proof of~\cite[Lemma~6]{EFHK_17}.
The third part can be found in~\cite[Lem.~2.7]{KilbasST_06}.
\begin{lemma}\label{lem:rl}
  \begin{itemize}
    \item[(i)] For every $s\in\R$ with $-\beta\leq s$ and $\beta>0$, the operators $_0D^{-\beta}$
      and $D_T^{-\beta}$ can be extended to bounded linear operators
      $\wilde H^{s}(J)\rightarrow H^{s+\beta}(J)$.
    \item[(ii)] For $0<\beta<1$, the operator $_0D^{-\beta}$ is elliptic on $H^{-\beta/2}(J)$.
    \item[(iii)] For $0<\beta<1$ and $u,v\in L_2(J)$ it holds
      $\vdual{D_0^{-\beta}u}{v} = \vdual{u}{D_T^{-\beta}v}$.
  \end{itemize}
  \qed
\end{lemma}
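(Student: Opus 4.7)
The plan is to prove the three claims in the order (iii), (ii), (i), since each subsequent item makes use of its predecessors via duality and interpolation. Item (iii) is essentially immediate: Fubini applied to the double integral defining $\vdual{{_0D}^{-\beta}u}{v}$ for $u,v\in L_2(J)$ swaps the order of integration over the triangle $\{0\le s\le t\le T\}$, where positivity of the kernel $(t-s)^{\beta-1}$ justifies the interchange, leaving exactly $\vdual{u}{D_T^{-\beta}v}$.

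For (ii), I exploit the convolutional structure of $_0D^{-\beta}$: after extending $u$ by zero to $\R$, the operator becomes convolution with the causal kernel $k_\beta(t):=t_+^{\beta-1}/\Gamma(\beta)$, whose Fourier transform is the principal branch of $(i\xi)^{-\beta}=|\xi|^{-\beta}e^{-i(\beta\pi/2)\operatorname{sign}(\xi)}$. Plancherel then gives
\[
  \vdual{{_0D}^{-\beta}u}{u}_{L_2(J)} = \frac{1}{2\pi}\int_{\R} (i\xi)^{-\beta} \abs{\hat u(\xi)}^2\,d\xi,
\]
whose real part equals $(\cos(\beta\pi/2)/(2\pi))\int_\R |\xi|^{-\beta}\abs{\hat u(\xi)}^2\,d\xi$, which is equivalent to $\cos(\beta\pi/2)\norm{u}{H^{-\beta/2}(J)}^2$ via the definition of the dual norm and the identification of zero extensions with their images in $H^{\beta/2}(\R)$. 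Since $0<\beta<1$ forces $\cos(\beta\pi/2)>0$, the desired ellipticity follows.

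For (i) I proceed in three stages. First, the case $s=0$, namely $_0D^{-\beta}:L_2(J)\to H^\beta(J)$, follows from a direct kernel estimate: the $L_2$ part via Young's inequality applied to convolution with a kernel in $L_1(J)$, and the $H^\beta$-seminorm via splitting the double difference of the integral operator into a regular and a singular contribution. Second, the endpoint $s=-\beta$ is obtained by duality: the analogous argument for the time-reversed kernel gives $D_T^{-\beta}:L_2(J)\to H^\beta(J)$, and item (iii) identifies $_0D^{-\beta}$ with its $L_2$-adjoint, so boundedness transfers to the corresponding dual space. K-method interpolation between these two endpoints covers $s\in[-\beta,0]$. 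Third, for $s>0$ one iterates using the semigroup identity ${_0D}^{-\beta}\circ{_0D}^{-\gamma}={_0D}^{-\beta-\gamma}$ together with the classical relation $D({_0D}^{-\beta}u)={_0D}^{1-\beta}u$, which for functions vanishing at the left endpoint lifts boundedness across integer jumps in $s$; interpolation then fills the gaps.

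The main obstacle is the positive-$s$ regime in (i): the operator $_0D^{-\beta}$ respects vanishing of the left endpoint trace but destroys vanishing of the right, so the asymmetry is precisely what forces the tilde on the domain side once trace conditions become active at $s>1/2$. Since items (i)-(ii) are taken from~\cite[Lemmas~5 and 6]{EFHK_17} and item (iii) from~\cite[Lem.~2.7]{KilbasST_06}, the arguments above essentially reproduce the proofs given there.
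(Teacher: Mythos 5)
The paper itself gives no proof of this lemma: it is stated with \qed and the three parts are delegated to \cite[Lemmas~5, 6]{EFHK_17} and \cite[Lem.~2.7]{KilbasST_06}. Your reconstruction follows exactly the route of those references (and of the technique the paper itself redeploys in the proof of Lemma~\ref{lem:ext2bochner}): Fubini over the triangle for (iii); the Fourier symbol $(i\xi)^{-\beta}$ of the causal kernel, Plancherel, and $\cos(\beta\pi/2)>0$ for the coercivity in (ii); and endpoint estimates plus duality via (iii), K-interpolation, and the semigroup/commutation identities for (i). All of this is correct in structure, and the duality step $\wilde H^{-\beta}(J)=H^\beta(J)'\to L_2(J)$ from $D_T^{-\beta}:L_2(J)\to H^\beta(J)$ is exactly right. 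Two places where your sketch is thinnest, for the record: in (ii) you only need (and only cleanly get) the one-sided bound $\int_\R|\xi|^{-\beta}|\widehat{\tilde u}|^2\,d\xi\gtrsim\norm{u}{H^{-\beta/2}(J)}^2$, since $|\xi|^{-\beta}\gtrsim(1+\xi^2)^{-\beta/2}$ and the zero extension is an admissible extension for the dual norm --- calling the two quantities ``equivalent'' is more than is needed and not quite free; and in (i) the $s=0$ case for $\beta\geq 1/2$ is the genuinely delicate step (on the Fourier side the zero extension of ${_0D}^{-\beta}u$ need not lie in $H^\beta(\R)$ because of the low-frequency singularity $|\xi|^{-2\beta}$, so one must exploit that only the restriction to $J$ matters, e.g.\ via a cutoff), which your ``regular plus singular split of the double difference'' glosses over but which is handled in the cited references.
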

%We know already that the Riemann-Liouville fractional integrals
%\begin{align*}
%  _0D^{-\beta} u(t) := \frac{1}{\Gamma(\beta)} \int_0^t(t-s)^{\beta-1}u(s)\,ds,\\
%  D^{-\beta}_1 u(t) := \frac{1}{\Gamma(\beta)} \int_t^1 (s-t)^{\beta-1}u(s)\,ds,
%\end{align*}
%are linear and bounded from $L_2(J)$ to $H^\beta(J)$.
%--------------------------------------------------------------------------------------------
%\subsection{Mittag-Leffler functions}
%--------------------------------------------------------------------------------------------
The Mittag-Leffler function arises naturally in the study of fractional differential equations.
We refer to~\cite[Section~18.1]{ErdelyiMOT_81} for an overview.
It is defined as
\begin{align*}
  E_{n_1,n_2}(z) := \sum_{k=0}^\infty \frac{z^k}{\Gamma(kn_1+n_2)}.
\end{align*}
According to~\cite[Thm.~1.6]{Podlubny}, for $z\in\R$,
\begin{align}\label{eq:mf}
  E_{n_1,n_2}(z) \lesssim \frac{1}{1+\abs{z}},
\end{align}
and due to~\cite[Thm.~4.3]{Diethelm_FracDiff10},
\begin{align}\label{eq:d:eig}
  _0D^{\alpha-1}D E_{\alpha,1}(\lambda t^\alpha) = \lambda E_{\alpha,1}(\lambda t^\alpha).
\end{align}
Furthermore, by~\cite{Schneider_96}, $E_{\alpha,1}(-z)$ is completely monotone for $0<\alpha\leq 1$ and positive $z$,
in particular,
\begin{align}\label{eq:mon}
  E_{\alpha,1}'(-z) \geq 0 \quad\text{ for positive } z. 
\end{align}
We will need the following result on fractional seminorms, which combines the $H^s$ norm
and the dual norm of the distributional derivative.
%--------------------------------------------------------------------------------------------
\begin{lemma}\label{lem:poincare}
  Let $s\in (0,1)$ be fixed. There holds
  \begin{align*}
    \snorm{u}{H^s(J)} \lesssim \norm{Du}{H^{s-1}(J)}
    \quad\text{ for all } u\in H^s(J),
  \end{align*}
  where $Du$ is the distributional derivative of $u$.
\end{lemma}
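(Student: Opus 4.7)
The plan is to invert $D$ on mean-zero functions and then invoke real interpolation. Writing $u_0 := u - \bar u$ with $\bar u := \tfrac{1}{T}\int_J u$, one has $|u|_{H^s(J)} = |u_0|_{H^s(J)} \leq \|u_0\|_{H^s(J)}$ while $Du_0 = Du$, so it suffices to construct a bounded operator $T$ from $H^{s-1}(J)$ into the mean-zero subspace of $H^s(J)$ satisfying $DT = I$; applied to $f = Du$, this operator recovers $u_0$.

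I would first build $T$ at two endpoints. For $f \in L_2(J)$, let $T_1 f(t) := \int_0^t f(\tau)\,d\tau - c(f)$ with the constant $c(f)$ chosen so that $\int_J T_1 f = 0$; this gives a bounded map $L_2(J) \to H^1(J)$ into the mean-zero subspace with $D T_1 f = f$. For $f \in H^{-1}(J) = \widetilde H^1(J)'$, Friedrichs' inequality makes the bilinear form $(U,V) \mapsto \int_J U' V'$ coercive on $\widetilde H^1(J)$, and Lax--Milgram delivers a unique $U \in \widetilde H^1(J)$ solving $\int_J U' V' = -\langle f,V\rangle$ for every $V \in \widetilde H^1(J)$. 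Setting $T_0 f := -U'$ gives $T_0 f \in L_2(J)$, $\int_J T_0 f = -(U(T)-U(0)) = 0$, and $D T_0 f = -U'' = f$ distributionally. A short Fubini computation shows $T_0 f = T_1 f$ whenever $f \in L_2(J)$, so the two operators assemble into a single linear map on $L_2(J) + H^{-1}(J)$.

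Next I would interpolate. On the source, $[H^{-1}(J), L_2(J)]_{s,2} = \widetilde H^{1-s}(J)' = H^{s-1}(J)$, by the duality formula for real interpolation together with $[L_2(J), \widetilde H^1(J)]_{1-s,2} = \widetilde H^{1-s}(J)$. On the target, the mean-value operator $P u := u - \bar u$ is a bounded projection on both $L_2(J)$ and $H^1(J)$, hence on $[L_2(J), H^1(J)]_{s,2} = H^s(J)$, and a standard argument identifies the corresponding interpolation space of its range with the mean-zero subspace of $H^s(J)$. Interpolating the compatible pair $(T_0, T_1)$ thus yields a bounded operator $T : H^{s-1}(J) \to H^s(J)$ taking values in the mean-zero subspace, with $T \circ D = P$. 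Therefore
\[
|u|_{H^s(J)} \leq \|u_0\|_{H^s(J)} = \|T(Du)\|_{H^s(J)} \lesssim \|Du\|_{H^{s-1}(J)},
\]
which is the desired estimate.

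The main obstacle is the interpolation identification of the mean-zero subspaces, which hinges on $P$ being a uniformly bounded projection on the couple, together with the verification that $T_0$ and $T_1$ are consistent on $L_2(J)$ so that interpolation actually sees one well-defined operator.
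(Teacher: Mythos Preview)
Your approach is essentially the paper's: both reduce to mean-zero functions, prove the two endpoint estimates $\|u-\bar u\|_{L_2(J)}\lesssim\|Du\|_{H^{-1}(J)}$ (via a variational/Lax--Milgram argument) and $\|u-\bar u\|_{H^1(J)}\lesssim\|Du\|_{L_2(J)}$, and then interpolate --- you by interpolating an explicit right-inverse $T$ of $D$, the paper by directly comparing $K$-functionals (each $w\in L_2(J)$ in a decomposition of $Du$ is lifted to a mean-zero antiderivative $v\in H^1(J)$, which sidesteps the consistency and subspace-interpolation checks you flag at the end). One sign slip: from $\int_J U'V' = -\langle f,V\rangle$ one gets $U''=f$, so $DT_0f=-U''=-f$; drop the minus in the variational equation (or set $T_0f:=U'$) and everything goes through.
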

\begin{proof}
  As $u\in L_2(J)$, it holds $Du\in H^{-1}(J)$.
  We can write $u=D\psi + c$ with $c\in\R$, where
  $\psi\in \wilde H^1(J)$ is the unique solution of
  $\vdual{D\psi}{D\varphi}=\vdual{u}{D\varphi}$ for all $\varphi\in \wilde H^1(J)$.
  Then, $\norm{\psi}{\wilde H^1(J)}\lesssim\norm{u}{L_2(J)}$,
  and due to the definition of the distributional derivative we see
  \begin{align*}
    \abs{\vdual{u}{D\psi}} = 
    \abs{\vdual{Du}{\psi}} \lesssim \norm{Du}{H^{-1}(J)}\norm{\psi}{\wilde H^1(J)}
    \lesssim \norm{Du}{H^{-1}(J)} \norm{u}{L_2(J)}.
  \end{align*}
  We conclude that for $u\in L_2(J)$, it holds
  \begin{align*}
    \norm{u}{L_2(J)}^2 = \vdual{u}{D\psi} + \vdual{u}{c}
    \lesssim \norm{Du}{H^{-1}(J)} \norm{u}{L_2(J)} + \vdual{u}{c}.
  \end{align*}
  Now we apply this estimate to $u-\overline u$, where $\overline u$ denotes the mean value
  of $u$, and obtain
  \begin{align}\label{lemma:poincare:-1}
    \norm{u-\overline u}{L_2(J)} \lesssim \norm{Du}{H^{-1}(J)}.
  \end{align}
  The standard Poincar\'e inequality states that
  \begin{align}\label{lemma:poincare:0}
    \norm{u-\overline u}{H^1(J)} \lesssim \norm{Du}{L_2(J)}.
  \end{align}
  The $H^s(J)$ norm can equivalently be obtained by the K-method of
  interpolation via
  \begin{align*}
    \norm{u-\overline u}{H^s(J)}^2 \simeq \norm{u-\overline
    u}{[L_2(J),H^1(J)]_{s,2}}^2 =
    \int_0^\infty t^{-2s} \left(
    \inf_{v\in H^1(J)} \norm{u-\overline{u}-v}{L_2(J)}^2 + t^2 \norm{v}{H^1(J)}^2
    \right) \frac{dt}{t}.
  \end{align*}
  Using~\eqref{lemma:poincare:-1} and~\eqref{lemma:poincare:0}, we obtain
  \begin{align*}
    \inf_{v\in H^1(J)} \norm{u-\overline{u}-v}{L_2(J)}^2 + t^2 \norm{v}{H^1(J)}^2
    &\leq
    \inf_{\substack{v\in H^1(J)\\\overline v=0}} \norm{u-\overline{u}-v}{L_2(J)}^2 + t^2
    \norm{v}{H^1(J)}^2\\
    &\lesssim
    \inf_{\substack{v\in H^1(J)\\\overline v=0}}
    \norm{Du-Dv}{H^{-1}(J)}^2 + t^2 \norm{Dv}{L_2(J)}^2
  \end{align*}
  Next we use that for $w\in L_2(J)$ there is a $\psi\in H^1(J)$ with $\overline\psi=0$ such that
  $D\psi=w$. We conclude
  \begin{align*}
    \norm{u-\overline u}{H^s(J)}^2 \lesssim
    \int_0^\infty t^{-2s} \left(
    \inf_{w\in L_2(J)} \norm{Du-w}{H^{-1}(J)}^2 + t^2
    \norm{w}{L_2(J)}^2
    \right) \frac{dt}{t}.
  \end{align*}
  By definition, the right-hand side is
  $\norm{Du}{[H^{-1}(J),L_2(J)]_{s,2}}^2$, which
  is equivalent to $\norm{Du}{H^{s-1}(J)}^2$.
  This concludes the proof.
%  for a specific $J$. The hidden constant does
%  not depend on $J$, which can be shown by scaling arguments.
\end{proof}
%--------------------------------------------------------------------------------------------
The next lemma establishes a norm equivalence on a fractional Sobolev space.
\begin{lemma}\label{lem:equivalence}
  Let $1/2<s<1$. Then, for all $u\in H^s(J)$,
  \begin{align*}
    \snorm{u}{H^s(J)}\sim \norm{_0D^{s-1}Du}{L_2(J)}.
  \end{align*}
\end{lemma}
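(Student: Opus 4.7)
Both quantities $\snorm{u}{H^s(J)}$ and $\norm{{_0D}^{s-1}Du}{L_2(J)}$ are invariant under adding a constant to $u$: the seminorm depends only on differences $u(s)-u(t)$, while $D$ annihilates constants, so ${_0D}^{s-1}Du$ is unchanged as well. I may therefore normalise $u$ so that $\bar u = 0$, after which a fractional Poincar\'e inequality---obtained by interpolating the standard $L_2$/$H^1$ Poincar\'e bounds exactly as in the proof of Lemma~\ref{lem:poincare}---yields $\norm{u}{H^s(J)} \lesssim \snorm{u}{H^s(J)}$.

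\textbf{Upper bound.} To prove $\norm{{_0D}^{s-1}Du}{L_2(J)} \lesssim \snorm{u}{H^s(J)}$, I apply Lemma~\ref{lem:rl}(i) with $\beta := 1-s$ and source exponent $s-1$. Because $s\in(1/2,1)$ forces $s-1\in(-1/2,0)$, one has $\wilde H^{s-1}(J)=H^{s-1}(J)$ with equivalent norms, so ${_0D}^{s-1}={_0D}^{-(1-s)}: H^{s-1}(J)\to L_2(J)$ is bounded. Combining this with the elementary bound $\norm{Du}{H^{s-1}(J)} \lesssim \norm{u}{H^s(J)}$ and the Poincar\'e estimate above closes this direction.

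\textbf{Lower bound.} For $\snorm{u}{H^s(J)} \lesssim \norm{{_0D}^{s-1}Du}{L_2(J)}$, Lemma~\ref{lem:poincare} already gives $\snorm{u}{H^s(J)} \lesssim \norm{Du}{H^{s-1}(J)}$, so it suffices to prove $\norm{f}{H^{s-1}(J)} \lesssim \norm{{_0D}^{s-1}f}{L_2(J)}$ for $f:=Du\in H^{s-1}(J)$. I construct a bounded left inverse of ${_0D}^{s-1}$ by setting ${_0D}^{1-s} := D\circ{_0D}^{-s}$. Lemma~\ref{lem:rl}(i) with $\beta=s$ gives ${_0D}^{-s}: L_2(J)\to H^s(J)$ boundedly, so ${_0D}^{1-s}: L_2(J)\to H^{s-1}(J)$ is bounded. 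On $C_0^\infty(J)$ the semigroup identity ${_0D}^{-s}\circ{_0D}^{-(1-s)}={_0D}^{-1}$ follows from Fubini and the Beta-integral, whence ${_0D}^{1-s}\circ{_0D}^{s-1}=D\circ{_0D}^{-1}=I$ on this dense subspace. Extension by density and continuity of the involved operators to all of $H^{s-1}(J)$ then gives $f={_0D}^{1-s}({_0D}^{s-1}f)$, from which the bound $\norm{f}{H^{s-1}(J)} \lesssim \norm{{_0D}^{s-1}f}{L_2(J)}$ is immediate.

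\textbf{Main obstacle.} The technically delicate point is the rigorous extension of the inversion identity ${_0D}^{1-s}\circ{_0D}^{s-1}=I$ from smooth test functions to the negative-order space $H^{s-1}(J)$. The underlying semigroup law for fractional integrals is classical on, say, $C_0^\infty(J)$, but promoting it to a distributional identity on $H^{s-1}(J)$ demands careful bookkeeping of Sobolev exponents and a density argument that simultaneously uses the continuity of ${_0D}^{-s}$, ${_0D}^{-(1-s)}$ and $D$ across the relevant parts of the Sobolev scale.
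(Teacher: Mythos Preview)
Your upper bound is identical to the paper's. For the lower bound, the two arguments diverge. The paper starts, as you do, from Lemma~\ref{lem:poincare}, but then invokes the ellipticity of ${_0D}^{2(s-1)}$ on $H^{s-1}(J)$ (Lemma~\ref{lem:rl}(ii)) together with the adjoint relation (Lemma~\ref{lem:rl}(iii)) to write
\[
\snorm{u}{H^s(J)}^2 \lesssim \norm{Du}{H^{s-1}(J)}^2 \lesssim \vdual{{_0D}^{2(s-1)}Du}{Du}
= \vdual{{_0D}^{s-1}Du}{{D_T}^{s-1}Du}
\le \norm{{_0D}^{s-1}Du}{L_2(J)}\norm{{D_T}^{s-1}Du}{L_2(J)},
\]
bounds the second factor by $\norm{u}{H^s(J)}$, and then cancels via Poincar\'e after subtracting the mean. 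Your route instead builds a bounded left inverse ${_0D}^{1-s}=D\circ{_0D}^{-s}:L_2(J)\to H^{s-1}(J)$ and extends the semigroup identity ${_0D}^{1-s}\circ{_0D}^{s-1}=I$ from $C_0^\infty(J)$ to $H^{s-1}(J)$ by density. This is correct: the density of $C_0^\infty(J)$ in $H^{s-1}(J)$ holds because $s-1\in(-1/2,0)$, and all the operators involved are continuous in the stated topologies by Lemma~\ref{lem:rl}(i), so the obstacle you flag is not serious. Your argument is slightly more economical in that it uses only the mapping properties of Lemma~\ref{lem:rl}(i) and avoids the ellipticity and adjoint statements; the paper's approach, on the other hand, stays closer to the bilinear-form viewpoint that underlies the later variational analysis and makes the symmetric role of ${_0D}^{s-1}$ and $D_T^{s-1}$ explicit.
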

\begin{proof}
  We have
  \begin{align*}
    \norm{_0D^{s-1}Du}{L_2(J)} \lesssim \norm{Du}{\wilde H^{s-1}(J)}
    \lesssim \norm{Du}{H^{s-1}(J)}\lesssim \norm{u}{H^s(J)}.
  \end{align*}
  Here, the first estimate follows from Lemma~\ref{lem:rl}, and the second one can be
  found in~\cite[Lem.~5]{Heuer}. To see the third estimate, recall that $D$ is the
  distributional derivative, and hence $\norm{Du}{H^{-1}(J)} \leq \norm{u}{L_2(J)}$
  as well as $\norm{Du}{L_2(J)} \leq \norm{u}{H^1(J)}$. The third estimate now follows
  from an interpolation argument.
  The fact that $Du = D(u-\overline u)$ for the mean value $\overline u$ of $u$
  and Poincare's inequality show
  \begin{align*}
    \snorm{u}{H^s(J)}\gtrsim \norm{_0D^{s-1}Du}{L_2(J)}.
  \end{align*}
  To show the converse estimate, we take $u\in C^\infty(\overline J)$ and estimate
  with Lemmas~\ref{lem:poincare} and~\ref{lem:rl}
  \begin{align*}
    \snorm{u}{H^s(J)}^2 &\lesssim \norm{Du}{H^{s-1}(J)}^2 \lesssim \vdual{_0D^{2(s-1)}Du}{Du}\\
    &= \vdual{_0D^{s-1}Du}{D_T^{s-1}Du} \leq
    \norm{_0D^{s-1}Du}{L_2(J)}\norm{D_T^{s-1}Du}{L_2(J)}.
  \end{align*}
  Here, the identity follows from Lemma~\ref{lem:rl}, $(iii)$.
  Due to Lemma~\ref{lem:rl}, it also holds
  $\norm{D_T^{s-1}Du}{L_2(J)}\lesssim \norm{Du}{\widetilde H^{s-1}(J)}\lesssim \norm{u}{H^s(J)}$,
  where the second estimate wa already shown at the beginning of this proof.
  Applying the whole argument to $u-\overline u$ and using Poincare's inequality finally shows the statement.
\end{proof}
%--------------------------------------------------------------------------------------------
\subsection{Sobolev and Bochner spaces}
%--------------------------------------------------------------------------------------------
For $s\in(-1,0]$ we have the interpolation estimate
\begin{align}\label{eq:int}
  \norm{u}{H^s(\Omega)} \leq C(s) \norm{u}{H^{-1}(\Omega)}^{(1-s)/2} \cdot
  \norm{u}{\wilde H^1(\Omega)}^{(1+s)/2},
\end{align}
with $C(s)>0$ a constant depending only on $s$.
This estimate follows for $s=0$ by duality,
%for $s\in (0,1)$
and for $s\in (-1,0)$ using additionally~\cite[1.3.3 (g)]{Triebel} and the fact
that duality and interpolation commute, cf.~\cite[1.11.2]{Triebel}.
For a measurable set $M\subset \R$, we denote by $\bfone_M$ the characteristic function on $M$, and
for a function $\phi:J\rightarrow \R$ and $x\in X$, we define $\phi\otimes x:=J\rightarrow X$
as $\left( \phi\otimes x \right)(s):= \phi(s)x$. We denote by $\cL(J)$ the $\sigma$-algebra of all
$ds$-measurable sets on $J$, and by $S(J)$ the set of simple functions.
It is known that the following subsets are dense for $J$ bounded or $J=\R$,
\begin{align*}
  S(J;X) &:= \left\{ \sum_{i=1}^n \bfone_{A_i} \otimes x_i \mid A_i \in \cL(J), x_i\in X
  \text{ for all } i=1,\dots, n, n\in\N \right\} \subset L_2(J;X),\\
  S^\infty(J;X) &:= \left\{ \sum_{i=1}^n \varphi_i \otimes x_i \mid \varphi_i\in C_c^\infty(\R),
  x_i\in X \text{ for all } i=1,\dots, n, n\in\N \right\} \subset H^1(J;X).
\end{align*}
We assume from now on that the Banach spaces $X$ are
reflexive; this implies that they have the so-called Radon-Nikod\'ym property,
cf.~\cite[Thm.~1.95]{HNVW_16},
which is sufficient and necessary
in order to have that $L_2(J;X)'$ is isometrically isomorphic to $L_2(J;X')$,
cf.~\cite[Thm.~1.84]{HNVW_16}.
We can extend $\partial_t$ for $u \in L_2(J;X)$ by defining $\partial_t u\in H^1_0(J;X')'$ as
\begin{align*}
  -\int_J \dual{u(s)}{\partial_t\varphi(s)}\,ds\quad\text{ for }\varphi\in H^1_0(J;X').
\end{align*}
Then, we have that $\partial_t:L_2(J;X)\rightarrow H^1_0(J;X')'$ is bounded.
Furthermore, $\partial_t:H^1(J;X)\rightarrow L_2(J;X) = L_2(J;X')'$ is bounded, and
by interpolation, we have that for $s\in(0,1)$
\begin{align}\label{eq:dt}
  \partial_t: \left[ L_2(J;X), H^1(J;X) \right]_s \rightarrow \left[ H^1_0(J;X')', L_2(J;X')' \right]_s
\end{align}
is bounded.
%--------------------------------------------------------------------------------------------
%\subsection{Sobolev and Bochner spaces}
%--------------------------------------------------------------------------------------------
%--------------------------------------------------------------------------------------------
We will need the following results on interpolation of Sobolev-Bochner spaces.
\begin{lemma}\label{lem:interpolation}
  There holds
  \begin{align*}
%    \left[ L_2(J;X), H^1(J;X) \right]_s = H^s(J;X)
    \left[ L_2(\R;X), H^1(\R;X) \right]_s = H^s(\R;X)
  \end{align*}
  and
  \begin{align*}
    \left[ \wilde H^1(J;X')', L_2(J;X')' \right]_s = \left[ \wilde H^1(J;X'), L_2(J;X') \right]_s'
    = \left[ L_2(J;X'), \wilde H^1(J;X') \right]_{1-s}'
    = \wilde H^{1-s}(J;X')'.
  \end{align*}
\end{lemma}
\begin{proof}
The first identity is due to~\cite[Thm.~2.91]{HNVW_16}, and
the second and third identities are well-known results in interpolation theory,
cf.~\cite[1.11.2]{Triebel}.
The last identity is a variant of the first one with bounded interval and
zero traces. Using extension theorems, its proof can in fact be reduced to the first identity.
In the case of scalar-valued Sobolev spaces, we
refer to~\cite[Thm.~14.2.3]{BrennerScott_08} for details.
\end{proof}
%--------------------------------------------------------------------------------------------
Next, we will establish continuous embeddings for the function space of our variational formulation.
\begin{lemma}\label{lem:embedding}
  Suppose that $\alpha\in (0,1)$, $s\in(-1,0]$ and $0<r$ are such that $r<\alpha(1-s)/2$.
  Then, we have the continuous embedding
  \begin{align*}
    L_2(J;\widetilde H^1(\Omega))\cap H^\alpha(J;H^{-1}(\Omega))
    \hookrightarrow
    H^r(J;H^s(\Omega)).
  \end{align*}
\end{lemma}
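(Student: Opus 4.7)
The plan is to reduce the statement to the pointwise spatial interpolation inequality~\eqref{eq:int} applied inside the time integrals, followed by H\"older's inequality with exponents $p=2/(1-s)$ and $q=2/(1+s)$, which are conjugate since $s\in(-1,0]$. With this choice one has $(1-s)p=2$ and $(1+s)q=2$, so the H\"older factors will produce the $H^{-1}(\Omega)$-squared and $\wilde H^1(\Omega)$-squared quantities that make up the norms of the intersection space.

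First I would handle the easy part $\norm{u}{L_2(J;H^s(\Omega))}$. Using \eqref{eq:int} pointwise in $t$ and then H\"older in time with exponents $p,q$ as above gives
\begin{align*}
  \norm{u}{L_2(J;H^s(\Omega))}^2
  \lesssim \int_J \norm{u(t)}{H^{-1}(\Omega)}^{1-s}\norm{u(t)}{\wilde H^1(\Omega)}^{1+s}\,dt
  \lesssim \norm{u}{L_2(J;H^{-1}(\Omega))}^{1-s}\norm{u}{L_2(J;\wilde H^1(\Omega))}^{1+s},
\end{align*}
and the first factor is controlled by $\norm{u}{H^\alpha(J;H^{-1}(\Omega))}$.

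The main work is the fractional seminorm. I would apply~\eqref{eq:int} to the difference $u(\sigma)-u(\tau)$ in the integrand of $\snorm{u}{H^r(J;H^s(\Omega))}^2$, and then split the kernel $|\sigma-\tau|^{-(2r+1)}=|\sigma-\tau|^{-a}|\sigma-\tau|^{-b}$ with $a+b=2r+1$, choosing $a=(2\alpha+1)(1-s)/2$. H\"older with $(p,q)$ then gives
\begin{align*}
  \snorm{u}{H^r(J;H^s(\Omega))}^2
  \lesssim
  \left(\int_J\!\int_J \frac{\norm{u(\sigma)-u(\tau)}{H^{-1}(\Omega)}^{2}}{|\sigma-\tau|^{2\alpha+1}}\,d\sigma\,d\tau\right)^{1/p}
  \left(\int_J\!\int_J \frac{\norm{u(\sigma)-u(\tau)}{\wilde H^1(\Omega)}^{2}}{|\sigma-\tau|^{bq}}\,d\sigma\,d\tau\right)^{1/q},
\end{align*}
since $ap=2\alpha+1$. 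The first factor is $\snorm{u}{H^\alpha(J;H^{-1}(\Omega))}^{2/p}$. For the second factor, a direct computation shows $bq=(2r+1)q-(2\alpha+1)(1-s)q/2=2r\cdot 2/(1+s)+1-(2\alpha+1)(1-s)/(1+s)$, and a short calculation converts the hypothesis $r<\alpha(1-s)/2$ into the condition $bq<1$.

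Once $bq<1$, the kernel $|\sigma-\tau|^{-bq}$ is integrable on the bounded interval $J$; bounding $\norm{u(\sigma)-u(\tau)}{\wilde H^1(\Omega)}^2\leq 2(\norm{u(\sigma)}{\wilde H^1(\Omega)}^2+\norm{u(\tau)}{\wilde H^1(\Omega)}^2)$ and integrating out the singular variable via Fubini bounds the second factor by $\norm{u}{L_2(J;\wilde H^1(\Omega))}^{2/q}$ up to a constant depending on $T$ and on the gap $\alpha(1-s)/2-r$. The main obstacle is the bookkeeping of the H\"older exponents: one must verify simultaneously that $p,q$ are conjugate, that $ap=2\alpha+1$ produces the correct $H^\alpha(J;H^{-1}(\Omega))$ seminorm, and that the residual exponent $bq$ is strictly less than $1$ precisely under the stated hypothesis on $r$. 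With that algebra in place, combining the two parts yields the claimed continuous embedding.
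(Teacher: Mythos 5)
Your proposal is correct and follows essentially the paper's own route: both arguments rest on the spatial interpolation inequality~\eqref{eq:int} applied to $u(\sigma)-u(\tau)$ together with a H\"older splitting of the kernel with exponents $2/(1-s)$ and $2/(1+s)$, your condition $bq<1$ being exactly the paper's choice $1-\varepsilon=bq$ with $\varepsilon>0$, and applying H\"older to the full double integral instead of to the inner integral followed by Young's inequality is an immaterial variation. Only note the harmless slip in your displayed computation of $bq$, where the term $+1$ should read $+2/(1+s)$; the stated equivalence between $bq<1$ and $r<\alpha(1-s)/2$ is nevertheless correct.
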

\begin{proof}
  It is clear that $\norm{u}{L_2(J;H^s(\Omega))}\leq \norm{u}{L_2(J;\wilde H^1(\Omega))}$.
  To bound the $H^r$-seminorm, we write for $r<\alpha(1-s)/2$
  \begin{align*}
    2r+1 = (2\alpha+1)\frac{1-s}{2} + (1-\varepsilon)\frac{1+s}{2}
  \end{align*}
  for some $\varepsilon>0$. The interpolation estimate~\eqref{eq:int} and
  the inequalities of Cauchy-Schwarz and Young then yield
  \begin{align*}
    \int_J\int_J \frac{\norm{u(s)-u(t)}{\wilde H^s(\Omega)}^2}{\abs{s-t}^{2r+1}}\,dtds
    &\lesssim \int_J\int_J \frac{\norm{u(s)-u(t)}{H^{-1}(\Omega)}^{1-s}\norm{u(s)-u(t)}{\wilde H^1(\Omega)}^{1+s}}{\abs{s-t}^{2r+1}}\,dtds\\
    &\lesssim \int_J \left( \int_J \frac{\norm{u(s)-u(t)}{H^{-1}(\Omega)}^2}{\abs{s-t}^{2\alpha+1}}\,dt\right)^{(1-s)/2} \\
    &\qquad\qquad\left( \int_J \frac{\norm{u(s)-u(t)}{\wilde H^1(\Omega)}^2}{\abs{s-t}^{1-\varepsilon}}\,dt \right)^{(1+s)/2}\,ds\\
    &\lesssim \int_J \int_J \frac{\norm{u(s)-u(t)}{H^{-1}(\Omega)}^2}{\abs{s-t}^{2\alpha+1}}\,dtds
    + \int_J \int_J \frac{\norm{u(s)-u(t)}{\wilde H^1(\Omega)}^2}{\abs{s-t}^{1-\varepsilon}}\,dtds\\
  \end{align*}
  and as $\varepsilon>0$, the last integral can be bounded by $\norm{u}{L_2(J;\wilde H^1(\Omega))}$.
\end{proof}
%--------------------------------------------------------------------------------------------
\begin{cor}\label{cor:embedding}
Suppose that $\alpha\in (0,1)$ and $s\in(-1,0]$ are such that $s<1-1/\alpha$.
Then, we have the continuous embedding
  \begin{align*}
    L_2(J;\widetilde H^1(\Omega))\cap H^\alpha(J;H^{-1}(\Omega))
    \hookrightarrow
    C(\overline J;H^s(\Omega)).
  \end{align*}
\end{cor}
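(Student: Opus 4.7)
The plan is to chain Lemma~\ref{lem:embedding} with the standard fractional Sobolev--Bochner trace embedding $H^r(J;X)\hookrightarrow C(\overline J;X)$, valid for any Hilbert space $X$ whenever $r>1/2$. This Bochner-valued embedding is already mentioned in the remark following Theorem~\ref{thm:var}, and its scalar-valued analogue is classical; it will play the role of the final step.

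The key algebraic observation is that the hypothesis $s<1-1/\alpha$ is equivalent to
\begin{align*}
  \frac{\alpha(1-s)}{2} > \frac{1}{2}.
\end{align*}
This is exactly what is needed in order to choose some $r\in\R$ with
\begin{align*}
  \frac{1}{2} < r < \frac{\alpha(1-s)}{2}.
\end{align*}
The lower bound on $r$ places us in the regime where the Sobolev--Bochner trace embedding applies, while the upper bound places us in the regime covered by Lemma~\ref{lem:embedding}.

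For such an $r$, Lemma~\ref{lem:embedding} (applied with the Hilbert space $H^s(\Omega)$ as target) yields the continuous embedding
\begin{align*}
  L_2(J;\widetilde H^1(\Omega))\cap H^\alpha(J;H^{-1}(\Omega))\hookrightarrow H^r(J;H^s(\Omega)),
\end{align*}
and then the trace embedding gives $H^r(J;H^s(\Omega))\hookrightarrow C(\overline J;H^s(\Omega))$ because $r>1/2$. Composing the two yields the stated continuous embedding.

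I do not anticipate any serious obstacle: all heavy lifting has been done in Lemma~\ref{lem:embedding}, and the remaining content is a short chaining argument together with the arithmetic verification that $s<1-1/\alpha$ opens up the interval $(1/2,\alpha(1-s)/2)$. The only minor subtlety is the appeal to the Bochner-valued Sobolev embedding $H^{1/2+\varepsilon}(J;X)\hookrightarrow C(\overline J;X)$, which is treated as a standard result here (and explains the $\varepsilon$-loss also remarked upon after Theorem~\ref{thm:var}).
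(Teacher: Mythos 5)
Your proposal is correct and follows essentially the same route as the paper: the equivalence $s<1-1/\alpha \Leftrightarrow \alpha(1-s)/2>1/2$, Lemma~\ref{lem:embedding} with an exponent $r=1/2+\varepsilon$ in the open interval $(1/2,\alpha(1-s)/2)$, and the Bochner-valued embedding $H^{1/2+\varepsilon}(J;X)\hookrightarrow C(\overline J;X)$ (which the paper justifies by citing \cite[Thm.~2.95]{HNVW_16}).
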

\begin{proof}
  If $s<1-1/\alpha$, then $1/2<\alpha(1-s)/2$, and according to Lemma~\ref{lem:embedding}
  there holds the continuous embedding
  $L_2(J;\widetilde H^1(\Omega))\cap H^\alpha(J;H^{-1}(\Omega))\hookrightarrow H^{1/2+\varepsilon}(J;H^s(\Omega))$
  for a sufficiently small $\varepsilon>0$.
  According to~\cite[Thm.~2.95]{HNVW_16} there also holds the continuous
  embedding $H^{1/2+\varepsilon}(J;H^s(\Omega))\hookrightarrow C(\overline J;H^{s}(\Omega))$,
  and this proves the statement.
\end{proof}
The next lemma shows that the Riemann-Liouville fractional integral operators can be extended
in the canonical way (i.e., by tensorisation) to Sobolev-Bochner spaces.
\begin{lemma}\label{lem:ext2bochner}
  Suppose that $X$ is a Hilbert space and $0<\beta<1/2$. Then, the operator
  \begin{align*}
    _0D^{-\beta}\otimes I:=
    \begin{cases}
      S(J;X) \rightarrow H^\beta(J;X)\\
      \sum_{i=1}^n x_i \bfone_{A_i} \mapsto \sum_{i=1}^n (_0D^{-\beta}\bfone_{A_i}) x_i
    \end{cases}
  \end{align*}
  can be extended uniquely to a linear and bounded operator
  $_0D^{-\beta}:L_2(J;X)\rightarrow H^\beta(J;X)$.
  The same statement is true for the operator $D_T^{-\beta}\otimes I$.
\end{lemma}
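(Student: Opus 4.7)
The plan is to reduce the vector-valued statement to the scalar estimate
${}_0D^{-\beta}:L_2(J)\to H^\beta(J)$, which is Lemma~\ref{lem:rl}(i) applied with $s=0$
(admissible since $-\beta\leq 0$), together with density of $S(J;X)$ in $L_2(J;X)$. The essential
additional ingredient is the Hilbert structure of $X$: it guarantees that both the $L_2(J;X)$ norm
and the Gagliardo seminorm defining $H^\beta(J;X)$ decouple across an orthonormal basis of any
finite-dimensional subspace of $X$.

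Concretely, I would start from $u=\sum_{i=1}^n \bfone_{A_i}\otimes x_i\in S(J;X)$, set
$V:=\mathrm{span}\{x_1,\dots,x_n\}\subset X$, and fix an orthonormal basis $\{e_1,\dots,e_N\}$ of $V$.
Expanding $x_i=\sum_{k=1}^N c_{ik}e_k$ yields $u(t)=\sum_{k=1}^N u_k(t)e_k$ with scalar
$u_k:=\sum_i c_{ik}\bfone_{A_i}\in L_2(J)$, and by definition of the tensorised operator,
\begin{align*}
  ({}_0D^{-\beta}\otimes I)u(t) \;=\; \sum_{k=1}^N ({}_0D^{-\beta}u_k)(t)\,e_k.
\end{align*}
Because $X$ is Hilbert, $\|\sum_k v_k e_k\|_X^2=\sum_k |v_k|^2$ for scalars $v_k$, and applying this
identity pointwise in $t$ and, within the double integral, to the difference $u(s)-u(t)$ gives
\begin{align*}
  \|u\|_{L_2(J;X)}^2 = \sum_{k=1}^N \|u_k\|_{L_2(J)}^2,\qquad
  \|({}_0D^{-\beta}\otimes I)u\|_{H^\beta(J;X)}^2 = \sum_{k=1}^N \|{}_0D^{-\beta}u_k\|_{H^\beta(J)}^2.
\end{align*}

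The scalar bound from Lemma~\ref{lem:rl}(i) applied termwise then yields
$\|({}_0D^{-\beta}\otimes I)u\|_{H^\beta(J;X)}\leq C\|u\|_{L_2(J;X)}$ with $C$ independent of $u$ and of
the chosen basis. Density of $S(J;X)$ in $L_2(J;X)$ provides the unique bounded linear extension to
${}_0D^{-\beta}:L_2(J;X)\to H^\beta(J;X)$. The argument for $D_T^{-\beta}\otimes I$ is verbatim the
same, since $D_T^{-\beta}$ satisfies the same scalar mapping property by Lemma~\ref{lem:rl}(i).

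I do not expect any significant obstacle. The only real content is the norm decoupling across an
orthonormal basis of $V$, which is a direct consequence of the Hilbert structure of $X$ and the fact
that only finitely many modes appear for any element of $S(J;X)$. The hypothesis $\beta<1/2$ enters
only implicitly through the scalar step, ensuring $H^\beta(J)=\wilde H^\beta(J)$ so that no
boundary/trace condition intrudes when invoking Lemma~\ref{lem:rl}(i).
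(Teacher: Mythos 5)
Your proof is correct, but it follows a genuinely different and more elementary route than the paper. The paper does not reduce to the scalar result of Lemma~\ref{lem:rl}(i) directly: it first gets $L_2(J;X)\to L_2(J;X)$ boundedness via positivity of ${}_0D^{-\beta}$ and a tensorization theorem from~\cite{HNVW_16}, then follows~\cite{JLPR_15} by extending to $\R$ by zero, showing that the vector-valued Fourier transform is an isometry $H^s(\R;X)\to L_2(\R,g^s;X)$ (via Plancherel for simple functions plus an interpolation/K-functional argument), and finally estimating the weighted Fourier norm of ${}_{-\infty}D^{-\beta}u$ using the symbol of the fractional integral, splitting low and high frequencies. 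Your argument bypasses all of this: for a simple function you expand over an orthonormal basis of the finite-dimensional span of its values, observe that both the $L_2(J;X)$ norm and the Gagliardo seminorm of $H^\beta(J;X)$ decouple exactly into sums of scalar norms (this is where the Hilbert structure of $X$ enters, and it is all that is needed), apply the scalar bound ${}_0D^{-\beta}:L_2(J)\to H^\beta(J)$ of Lemma~\ref{lem:rl}(i) termwise, and conclude by density of $S(J;X)$. Given that the scalar mapping property is already available as Lemma~\ref{lem:rl}(i), your reduction is shorter and avoids the vector-valued Fourier machinery entirely; the paper's route is essentially a self-contained vector-valued redo of the scalar Fourier argument. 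One minor inaccuracy in your closing remark: the hypothesis $\beta<1/2$ is not what makes Lemma~\ref{lem:rl}(i) applicable --- with $s=0$ the domain is $\widetilde H^0(J)=L_2(J)$, so no trace condition is involved and only $\beta>0$ is needed; your argument in fact proves the statement without using $\beta<1/2$ at all, which is harmless since the lemma is only claimed (and later used, with $\beta=1-\alpha$, $\alpha\in(1/2,1)$) in that range.
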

\begin{proof}
%  Without loss of generality, we may assume that the sets $A_i$ of all simple functions are pairwise
%  disjoint, such that
%  \begin{align}\label{eq:ext:norm}
%    \norm{\sum_{i=1}^n \bfone_{A_i}\norm{x_i}{X}}{L_2(J)} =
%    \norm{\sum_{i=1}^n \bfone_{A_i}x_i}{L_2(J;X)}
%  \end{align}
  It follow from Lemma~\ref{lem:rl} (i) that the operator $_0D^{-\beta}: L_2(J)\rightarrow L_2(J)$
  is bounded.
  Furthermore, it is a positive operator, i.e., $_0D^{-\beta}(\bfone_{A_i})\geq 0$ on $J$.
  It is then easy to see, cf.~\cite[Thm.~2.3]{HNVW_16}, that
  \begin{align}\label{eq:D:L2bound}
    \norm{_0D^{-\beta}\otimes I u}{L_2(J;X)} \leq \norm{_0D^{-\beta}}{L_2(J)\rightarrow L_2(J)}
    \norm{u}{L_2(J;X)}
    \quad \text{ for } u\in S(J;X),
  \end{align}
  and as $S(J;X)$ is dense in $L_2(J;X)$, we obtain boundedness $_0D^{-\beta}:L_2(J;X)\rightarrow L_2(J;X)$.
  Next, we will follow the ideas developed in~\cite[Thm.~3.1]{JLPR_15}.
  Denoting by $\wilde f\in L_2(\R)$ the extension of $f$ by zero, it holds
  $_0D^{-\beta}f (x) = {_{-\infty}D}^{-\beta}\wilde f (x)$.
  Denote by $\Ff:L_2(\R)\rightarrow L_2(\R)$ the Fourier transformation.
  Then, the operator $\Ff\otimes I$ extends to an isometry $\Ff:L_2(\R;X)\rightarrow L_2(\R;X)$:
  For general operators, this is a classical result by Marcienkiwicz and Zygmund~\cite{Marcinkiewicz_39},
  cf.~\cite[Thm.~2.9]{HNVW_16}, but
  in the present case of the Fourier transformation it can be seen readily by using density of simple functions
  $S(\R;X)$ in $L_2(\R;X)$ and the Plancherel theorem for the scalar-valued Fourier transformation.
  Furthermore, for $u\in L_2(\R;X)$ we have $\Ff\Ff u = \Pp u$ with
  $\Pp u (x) = u(-x)$ the parity operator.
  For a function $\varphi=\sum_{i=1}^n\varphi_i\otimes x_i\in S^\infty(\R;X)$,
  we conclude
  \begin{align*}
    \norm{\varphi}{H^1(\R;X)}^2 &=
    \norm{\sum_{i=1}^n\varphi_i\otimes x_i}{L_2(\R;X)}^2 +
    \norm{\sum_{i=1}^n\partial_t\varphi_i\otimes x_i}{L_2(\R;X)}^2\\
    &= 
    \norm{\sum_{i=1}^n\Ff\varphi_i\otimes x_i}{L_2(\R;X)}^2 +
    \norm{\sum_{i=1}^n\Ff(\partial_t\varphi_i)\otimes x_i}{L_2(\R;X)}^2\\
    &=
    \norm{g\Ff\varphi}{L_2(\R;X)}^2,
  \end{align*}
  with weight function $g(\omega) := \sqrt{1+\omega^2}$. By density, this shows that
  $\Ff\otimes I$ can be extended to an isometry $\Ff:H^1(\R,X)\rightarrow L_2(\R,g;X)$.
  By interpolation and Lemma~\ref{lem:interpolation}, we conclude that
  $\Ff:H^s(\R;X)\rightarrow L_2(\R,g^s;X)$ is bounded. To show that this operator is an isometry,
  consider a decomposition $\wilde u_0 + \wilde u_1 = \Ff u$ with
  $\wilde u_0\in L_2(\R;X), \wilde u_1\in L_2(\R,g;X)$. From
  $\norm{\Ff \wilde u_1}{H^1(\R;X)} = \norm{\Ff \Ff \wilde u_1}{L_2(\R,g;X)} = \norm {\wilde u_1}{L_2(\R,g;X)}$
  we conclude $\Ff \wilde u_1\in H^1(\R;X)$ and due to $\Ff \wilde u_0 + \Ff \wilde u_1 = \Ff\Ff u = \Pp u$
  we have that $\Pp\Ff \wilde u_0 + \Pp \Ff \wilde u_1 = u$ is a decomposition of $u$. Hence,
  \begin{align*}
    \norm{\wilde u_0}{L_2(\R;X)}^2 + t \norm{\wilde u_1}{L_2(\R,g;X)}^2
    &= \norm{\Ff\wilde u_0}{L_2(\R;X)}^2 + t \norm{\Ff\wilde u_1}{H^1(\R;X)}^2\\
    &= \norm{\Pp\Ff\wilde u_0}{L_2(\R;X)}^2 + t \norm{\Pp\Ff\wilde u_1}{H^1(\R;X)}^2,
  \end{align*}
  which implies $K_{[L_2(\R;X),H^1(\R;X)]}(t,u)^2 \leq K_{[L_2(\R;X),L_2(\R,g;X)]}(t,\Ff u)$.
  This shows that $\Ff:H^s(\R;X)\rightarrow L_2(\R,g^s;X)$ is an isometry.
  Next, for a simple function $u\in S(\R;X)$,
  \begin{align*}
    \norm{_{-\infty}D^{-\beta}\otimes Iu}{H^s(\R;X)}^2 &=
    \int_{\R}g(\omega)^{2s} \norm{\Ff _{-\infty}D^{_\beta}u(\omega)}{X}^2\,d\omega\\
    &\lesssim \int_{\abs{\omega}\leq 1} \norm{\Ff _{-\infty}D^{_\beta}u(\omega)}{X}^2\,d\omega
    + \int_{\abs{\omega}> 1} (\omega^{-2}+1)^s\norm{\Ff u(\omega)}{X}^2\,d\omega\\
    &\leq \int_{\R} \norm{\Ff _{-\infty}D^{_\beta}u(\omega)}{X}^2\,d\omega
    + \int_{\abs{\omega}>1} \norm{\Ff u(\omega)}{X}^2\,d\omega\\
    &\lesssim  \int_{\R} \norm{_{-\infty}D^{_\beta}u(s)}{X}^2\,ds
    + \int_{\R} \norm{u(s)}{X}^2\,ds
    \lesssim \norm{u}{L_2(\R;X)}^2,
  \end{align*}
  and by density we get the desired result. The proof for $D_T^{-\beta}$ follows along
  the same lines.
\end{proof}
%--------------------------------------------------------------------------------------------
%By definition,
%they fulfill~\eqref{lem:ext:pos}, $_0D^{-\beta}$ fulfills~\eqref{lem:ext:inc}, and $D^{-\beta}_1$
%fulfills~\eqref{lem:ext:dec}.
%Hence, they can be extended as maps $_0D^{-\beta}\otimes I, D_1^{-\beta}\otimes I$ from
%$L_2(J;H^1_0(\Omega))\rightarrow H^\beta(J;H^1_0(\Omega))$.
\begin{lemma}\label{lem:ext2}
  The operator $_0D^{-\beta}u\otimes I$ has a unique extension as bounded and linear operator
  \begin{align*}
    _0D^{-\beta}\otimes I: H^\beta(J;\wilde H^1(\Omega))' \rightarrow L_2(J;H^{-1}(\Omega)).
  \end{align*}
\end{lemma}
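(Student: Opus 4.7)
The plan is to construct $_0D^{-\beta}$ on $H^\beta(J;\wilde H^1(\Omega))'$ as the Banach-space adjoint of $D_T^{-\beta}$ on $L_2(J;\wilde H^1(\Omega))$, exploiting the identity $\vdual{\,_0D^{-\beta}u}{v} = \vdual{u}{D_T^{-\beta}v}$ of Lemma~\ref{lem:rl}(iii). First, Lemma~\ref{lem:ext2bochner} applied to the Hilbert space $X = \wilde H^1(\Omega)$ already produces a bounded operator
\[
D_T^{-\beta}: L_2(J;\wilde H^1(\Omega)) \to H^\beta(J;\wilde H^1(\Omega)).
\]
Since $\wilde H^1(\Omega)$ is reflexive, the Bochner-dual identification discussed earlier in the paper gives $L_2(J;\wilde H^1(\Omega))' = L_2(J;H^{-1}(\Omega))$, and hence passing to the Banach-space adjoint yields a bounded linear map
\[
\bigl(D_T^{-\beta}\bigr)^*: H^\beta(J;\wilde H^1(\Omega))' \to L_2(J;H^{-1}(\Omega)).
\]
This operator is the candidate extension of $_0D^{-\beta}\otimes I$, and its norm is controlled by that of $D_T^{-\beta}$.

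Next I would verify that the adjoint $(D_T^{-\beta})^*$ agrees with the tensorized operator on a suitable dense set. Embedding $S(J;H^{-1}(\Omega))$ into $H^\beta(J;\wilde H^1(\Omega))'$ via the canonical pairing $\dual{u}{v} = \int_J \vdual{u(s)}{v(s)}\,ds$, bilinearity and Lemma~\ref{lem:rl}(iii) give, for $\phi\otimes x \in S(J;H^{-1}(\Omega))$ and $\psi\otimes y\in S(J;\wilde H^1(\Omega))$,
\[
\dual{\,_0D^{-\beta}(\phi\otimes x)}{\psi\otimes y} = \vdual{\,_0D^{-\beta}\phi}{\psi}\dual{x}{y} = \vdual{\phi}{D_T^{-\beta}\psi}\dual{x}{y} = \dual{\phi\otimes x}{D_T^{-\beta}(\psi\otimes y)},
\]
so that the action of $(D_T^{-\beta})^*$ on $\phi\otimes x$ is precisely $(_0D^{-\beta}\phi)\otimes x$. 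Hence the adjoint truly extends the tensorized definition.

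Finally, uniqueness of the extension reduces to density. Simple tensor functions are dense in $H^\beta(J;\wilde H^1(\Omega))$, which implies that $L_2(J;H^{-1}(\Omega))$ embeds continuously and densely into $H^\beta(J;\wilde H^1(\Omega))'$; since $S(J;H^{-1}(\Omega))$ is dense in $L_2(J;H^{-1}(\Omega))$, it is a fortiori dense in $H^\beta(J;\wilde H^1(\Omega))'$, forcing the extension to be unique. The main technical point I anticipate is the careful bookkeeping of the dual-pairing identifications — in particular, ensuring that the embedding $L_2(J;H^{-1}(\Omega)) \hookrightarrow H^\beta(J;\wilde H^1(\Omega))'$ is indeed dense and compatible with the tensorized definition — while everything else reduces to taking adjoints and using results already established.
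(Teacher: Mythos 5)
Your proposal is correct and follows essentially the same route as the paper: identify $_0D^{-\beta}\otimes I$ with the adjoint of $D_T^{-\beta}:L_2(J;\wilde H^1(\Omega))\to H^\beta(J;\wilde H^1(\Omega))$ furnished by Lemma~\ref{lem:ext2bochner}, check agreement on simple functions via Lemma~\ref{lem:rl}(iii), and obtain uniqueness from density of $L_2(J;H^{-1}(\Omega))$ in $H^\beta(J;\wilde H^1(\Omega))'$. The only differences are cosmetic: the paper phrases the adjoint step directly through the identity $\vdual{u}{D_T^{-\beta}\otimes I v}=\vdual{_0D^{-\beta}\otimes I u}{v}$ on simple functions rather than invoking the Banach-space adjoint, and the dense embedding $L_2(J;H^{-1}(\Omega))\hookrightarrow H^\beta(J;\wilde H^1(\Omega))'$ is properly deduced from the dense continuous embedding $H^\beta(J;\wilde H^1(\Omega))\hookrightarrow L_2(J;\wilde H^1(\Omega))$ (together with reflexivity), not from density of simple tensors in $H^\beta(J;\wilde H^1(\Omega))$ as you state it.
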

\begin{proof}
  For $u=\sum_{i=1}^n \bfone_{A_i}\otimes u_i\in S(J;H^{-1}(\Omega))$,
  $v=\sum_{i=1}^n \bfone_{A_i}\otimes v_i\in S(J;\wilde H^1(\Omega))$, we compute
  \begin{align}\label{eq:lem:ext2}
    \begin{split}
    \vdual{u}{D^{-\beta}_T\otimes I v} &= \int_J \dual{u(s)}{D^{-\beta}_T\otimes I v(s)}\,ds\\
    &= \sum_{i,j=1}^n \dual{u_i}{v_i}\int_J \bfone_{A_i}(s) D^{-\beta}_T\bfone_{A_j}(s)\,ds\\
    &= \sum_{i,j=1}^n \dual{u_i}{v_i}\int_J {_0D^{-\beta}}\bfone_{A_i}(s) \bfone_{A_j}(s)\,ds\\
    &= \vdual{_0D^{-\beta}\otimes Iu}{v}.
    \end{split}
  \end{align}
  As $H^\beta(J;\wilde H^1(\Omega))$ is dense in $L_2(J;\wilde H^1(\Omega))$,
  $L_2(J;H^{-1}(\Omega)) = L_2(J;\wilde H^1(\Omega))'$ is dense in $H^\beta(J;\wilde H^1(\Omega))'$.
  According to Lemma~\ref{lem:ext2bochner},
  $D_T^{-\beta}:L_2(J;\wilde H^1(\Omega))\rightarrow H^\beta(J;\wilde H^1(\Omega))$ is bounded,
  and hence the equality~\eqref{eq:lem:ext2} shows that $_0D^{-\beta}$ can be extended 
  as stipulated. This finishes the proof.
\end{proof}
%--------------------------------------------------------------------------------------------
\subsection{Proof of the main theorems}\label{proofs}
%--------------------------------------------------------------------------------------------
\begin{proof}[Proof of Theorem~\ref{thm:fracder}]
  Using the boundedness~\eqref{eq:dt} of $\partial_t$ and
  Lemmas~\ref{lem:interpolation} and~\ref{lem:ext2}, we conclude that
\begin{align*}
  _0D^{\alpha-1}\circ\partial_t:
  H^\alpha(J;H^{-1}(\Omega)) \rightarrow \wilde H^{1-\alpha}(J;\wilde H^1(\Omega))'
  = H^{1-\alpha}(J;\wilde H^1(\Omega))' \rightarrow L_2(J;H^{-1}(\Omega))
\end{align*}
is bounded.
\end{proof}
%--------------------------------------------------------------------------------------------
\begin{proof}[Proof of Theorem~\ref{thm:var}]
  We mimic the proof for parabolic PDE.
  Take $(w_k)_{k\geq1}$ the $L_2(\Omega)$-orthonormal basis of eigenfunctions
  and $(\lambda_k)_{k\geq1}$ the eigenvalues of $-\Delta$. We make the ansatz
  \begin{align*}
    u_m(t) := \sum_{k=1}^m d_m^k(t)w_k.
  \end{align*}
  Now, we are looking for $d_m^k:J\rightarrow \R$ such that
  \begin{align}\label{thm:eq:fde}
    \begin{split}
    \partial_t^\alpha d_m^k(t) + \lambda_k d_m^k(t)
    &= \dual{f(t)}{w_k},\quad k=1,\dots,m,\\
    d_m^k(0) &= \dual{g}{w_k},\quad k=1,\dots,m.
    \end{split}
  \end{align}
  According to~\cite[Thm.~2.1]{Barrett_54}, cf.~\cite[Thm.~7.2]{Diethelm_FracDiff10}
  and~\cite[Chapter~3.1]{KilbasST_06},
  the solutions to these equations are given uniquely by
  \begin{align*}
    d_m^k(t) = \dual{g}{w_k} E_\alpha(-\lambda_k t^\alpha)
    + \psi_k(t), \quad k=1,\dots,m,
  \end{align*}
  where $\psi_k(t) := \alpha \int_0^t \dual{f(t-s)}{w_k} s^{\alpha-1}E_\alpha'(-\lambda_ks^\alpha)\,ds$.
  In order to obtain energy estimates
  for the $u_m$, we can extend the calculations carried out in~\cite{SakamotoY_11}.
  However, as we aim at weaker initial values, we need a finer analysis.
  First, using the bound~\eqref{eq:mf}, we have for $\varepsilon\in[0,1]$
  \begin{align*}
    \abs{E_{\alpha,1}(z)} \lesssim \frac{1}{1+\abs{z}} \leq \abs{z}^{-(1-\varepsilon)}.
  \end{align*}
  Furthermore, $\alpha \int_J t^{\alpha-1} E_{\alpha,1}'(-\lambda_k t^\alpha)\,dt
  = \lambda_k^{-1} (1-E_{\alpha,1}(-\lambda_k T^\alpha))$, and
  $t^{\alpha-1}E_{\alpha,1}'(-\lambda_kt^\alpha)\geq 0$ due to~\eqref{eq:mon}.
  Hence, we see
  \begin{align}\label{eq:ml:est1}
    \int_J \abs{t^{\alpha-1}E_{\alpha,1}'(-\lambda_kt^\alpha)}\,dt \lesssim \lambda_k^{-1}.
  \end{align}
  We conclude that for $2\alpha(1-\varepsilon)<1$, it holds
  \begin{align}\label{eq:ml:est2}
    \norm{E_{\alpha,1}(-\lambda_k (\cdot)^\alpha)}{L_2(J)}^2 \leq C_\varepsilon \lambda_k^{-2(1-\varepsilon)}.
  \end{align}
  Furthermore, due to Lemma~\ref{lem:equivalence}, the identity~\eqref{eq:d:eig}
  and the previous estimate, we also have
  \begin{align}\label{eq:ml:est3}
    \snorm{E_\alpha(-\lambda_k (\cdot)^\alpha)}{H^\alpha(J)}^2
    \sim \lambda_k^2 \norm{E_{\alpha,1}(-\lambda_k (\cdot)^\alpha)}{L_2(J)}^2
    \leq C_\varepsilon \lambda_k^{2\varepsilon}.
  \end{align}
  By Young's inequality and~\eqref{eq:ml:est1},
  \begin{align}\label{eq:ml:est4}
    \norm{\psi_k}{L_2(J)}^2 &\lesssim
    \left( \int_J \dual{f(t)}{w_k}^2\,dt \right) \cdot
    \left( 
    \int_J \abs{t^{\alpha-1} E_\alpha'(-\lambda_kt^\alpha)}\,dt
    \right)^2
    \lesssim \lambda_k^{-2} \int_J \dual{f(t)}{w_k}^2\,dt.
  \end{align}
  According to~\cite[pp.~140]{Podlubny},
  it holds $\partial_t^\alpha \psi_k(t) = -\dual{f(t)}{w_k} - \lambda_k \psi_k(t)$.
  Hence, using Lemma~\ref{lem:equivalence} and~\eqref{eq:ml:est4}, we also see
  \begin{align}\label{eq:ml:est5}
    \snorm{\psi}{H^\alpha(J)}^2 &\lesssim
    \int_J \dual{f(t)}{w_k}^2\,dt.
  \end{align}
  Now choose $\varepsilon := (2-1/\alpha+\delta)/2$ and observe that $2\alpha(1-\varepsilon)=1-\alpha\delta <1$
  and $-1+2\varepsilon = 1-1/\alpha+\delta$. Using~\eqref{eq:ml:est2} and~\eqref{eq:ml:est4}, we estimate
  \begin{align*}
    \norm{u_m}{L_2(J;\wilde H^1(\Omega))}^2 = \sum_{k=0}^m \lambda_k \norm{d_m^k}{L_2(J)}^2
    &\lesssim \sum_{k=0}^m \lambda_k^{-1+2\varepsilon} \dual{g}{w_k}^2
    + \sum_{k=0}^m \lambda_k^{-1}\int_J \dual{f(t)}{w_k}^2\,dt\\
    &\lesssim \norm{g}{H^{1-1/\alpha+\delta}(\Omega)}^2 + \norm{f}{L_2(J;H^{-1}(\Omega))}^2.
  \end{align*}
  Using~\eqref{eq:ml:est3} and~\eqref{eq:ml:est5}, we can analogously estimate
  \begin{align*}
    \norm{u_m}{H^\alpha(J;H^{-1}(\Omega))}^2 
    \lesssim \norm{g}{H^{1-1/\alpha+\delta}(\Omega)}^2 + \norm{f}{L_2(J;H^{-1}(\Omega))}^2.
  \end{align*}
  Therefore, $(u_m)_{m\in\N}$ is a bounded sequence in $L_2(J;\wilde H^1(\Omega))$ and in
  $H^\alpha(J;H^{-1}(\Omega))$, and we conclude that there is
  a subsequence $(u_{m_k})_{k\in\N}$ which converges weakly to some $u\in L_2(J;\wilde H^1(\Omega))$
  and to some $\widetilde u\in H^\alpha(J;H^{-1}(\Omega))$.
  It follows that $u$ also converges weakly in $L_2(J;H^{-1}(\Omega))$ to $u$ as well as to $\widetilde u$,
  which yields $u=\widetilde u$. Taking into account the construction of the $u_m$ and invoking the weak limit,
  we obtain for all $v\in L_2(J;\wilde H^1(\Omega))$
  \begin{align*}
    \int_J \dual{\partial_t^\alpha u}{v} + \dual{\nabla u}{\nabla v}\,dt = \int_J\dual{f}{v}\,dt.
  \end{align*}
  Note that due to Corollary~\ref{cor:embedding}, $u_{m_k}$ also converges
  weakly to $u$ in $C([0,T];H^{1-1/\alpha+\delta}(\Omega))$,
  hence $g=u_{m_k}(0)\rightarrow u(0)$. This yields $u(0) = g$, and we conclude that $u$ is a weak solution.
  As for uniqueness, if $u$ is a weak solution with vanishing data, then the
  functions $u_k(t) := \vdual{u(t)}{w_k}$ solve the equations~\eqref{thm:eq:fde}
  with vanishing right-hand side, and hence $u_k(t)=0$.
\end{proof}
\textbf{Acknowledgement:} The author would like to thank Vincent~J.~Ervin for his valuable comments.
\bibliographystyle{abbrv}
\bibliography{literature}
\end{document}